\newtheorem{theorem}{Theorem}[section]
\newtheorem*{oldtheorem}{Theorem}
\theoremstyle{definition}
\theoremstyle{remark}
\def\R{\mathbb R}
\def\C{\mathbb C}
\def\N{\mathbb N}
\def\S{\mathscr S}
\def\({\left(}
\def\){\right)}
\def\[{\left[}
\def\]{\right]}
\def\<{\left<}
\def\>{\right>}
\def\less{\lesssim}
\begin{document}

\title{A Note on a Multilinear Local $Tb$ Theorem for Calder\'on-Zygmund Operators}

\author{Jarod Hart}
\address{Higuchi Biosciences Center \\ University of Kansas \\ Lawrence, KS 
}
\email{jvhart@ku.edu}
\author{Lucas Oliveira}
\address{Department of Mathematics, Universidade Federal do Rio Grande do Sul \\ Porto Alegre \\ Rio Grande do Sul, Brazil }
\email{lucas.oliveira@ufrgs.br}

\thanks{}


\subjclass[2010]{42B20, 42B25, 42B30}

\date{\today}

\dedicatory{ }

\keywords{Square Function, Littlewood-Paley-Stein, Calder\'on-Zygmund Operators}



\maketitle

\begin{abstract}
In this short note, we extend a local $Tb$ theorem that was proved in \cite{GHO} to a full multilinear local $Tb$ theorem.
\end{abstract}

\maketitle

\section{Introduction}

In \cite{GHO}, we proved a multilinear local $Tb$ theorem for square functions, and applied it to prove a local $Tb$ theorem for singular integrals.  The local $Tb$ theorem for singular integrals had ``local $Tb$ type'' testing conditions for the operator $T$ on pseudo-accretive collections $\{b_Q^i\}$ for $i=1,...,m$, but tested the adjoints of $T$ on the constant function $1$; see \cite{GHO} for more details on this. 
%
In this note, we make a few observations to show that this result can be extended to the following local $Tb$ theorem for multilinear singular integral operators.

\begin{theorem}\label{t(1,1)}
Let $T$ be a continuous $m$ linear operator from $\S\times\cdots\times\S$ into $\S'$ with a standard Calder\'on-Zygmund kernel $K$.  Suppose that $T\in WBP$ and there exist $2\leq q<\infty$ and $1<q_{i,j}<\infty$ with $\frac{1}{q_j}=\sum_{i=1}^m\frac{1}{q_{i,j}}$ and an $m$-compatible collection of functions $\{b_Q^{i,j}\}$ indexed by dyadic cubes $Q$ and $j=1,...,m$ such that
\begin{align}
&\int_Q\(\int_0^{\ell(Q)}|Q_tT^{*j}(P_tb_Q^{1,j},...,P_tb_Q^{m,j})(x)|^2\frac{dt}{t}\)^\frac{q_j}{2}dx\less|Q|.\label{Tcancel}
\end{align}
Then $T$ is bounded from $L^{p_1}\times\cdots\times L^{p_m}$ into $L^p$ for all $1<p_i<\infty$ such that $\frac{1}{p}=\frac{1}{p_1}+\cdot+\frac{1}{p_m}$.  Here we assume that this estimate holds for any approximation to the identity $P_t$ with smooth compactly supported kernels and any Littlewood-Paley-Stein projection operators $Q_t$ whose kernels also are smooth compactly supported function.
\end{theorem}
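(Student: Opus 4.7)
The plan is to apply the multilinear square function local $Tb$ theorem from \cite{GHO} to each operator family $\Theta_t^{(j)} := Q_t T^{*j}$, for $j = 1, \ldots, m$. The resulting square function bounds convert, via the Littlewood-Paley characterization of $L^r$, into $L^r$ bounds on each adjoint $T^{*j}$, and multilinear duality then yields the desired boundedness of $T$ itself.

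First, I would verify that each family $\Theta_t^{(j)}$ satisfies the kernel size, smoothness, and cancellation hypotheses required by the main theorem of \cite{GHO}. Since $T$ is a continuous $m$-linear Calder\'on-Zygmund operator with a standard kernel and $T \in WBP$, each adjoint $T^{*j}$ inherits the same properties. Composing such a CZ operator with $Q_t$ (which has a smooth, compactly supported kernel) produces an $m$-linear operator-valued family whose kernel has the appropriate off-diagonal decay, partial smoothness, and integral cancellation in $t$; the $WBP$ hypothesis enters to control the near-diagonal portion of the kernel, where the standard CZ estimates do not apply directly.

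Once the kernel verification is complete, hypothesis \eqref{Tcancel} is precisely the local $Tb$ hypothesis of \cite{GHO} applied to $\Theta_t^{(j)}$ with the pseudo-accretive system $\{b_Q^{i,j}\}_Q$. The conclusion is a square function bound
\begin{equation*}
\left\|\left(\int_0^\infty |Q_t T^{*j}(g_1, \ldots, g_m)|^2 \frac{dt}{t}\right)^{1/2}\right\|_{L^{r_j}} \lesssim \prod_{i=1}^m \|g_i\|_{L^{r_{i,j}}},
\end{equation*}
valid for all $1 < r_{i,j} < \infty$ with $1/r_j = \sum_i 1/r_{i,j}$. By the standard Littlewood-Paley characterization of $L^r$ spaces, this is equivalent to the $L^r$ boundedness of $T^{*j}$ on the corresponding product spaces, and multilinear duality converts these adjoint bounds into the desired $L^{p_1} \times \cdots \times L^{p_m} \to L^p$ boundedness of $T$ for the full range $1 < p_i < \infty$.

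The main obstacle is the kernel verification in the first step. The composition $Q_t T^{*j}$ is not itself a Calder\'on-Zygmund operator in the usual sense, and establishing the precise off-diagonal and cancellation bounds required by \cite{GHO} -- in particular the use of $WBP$ to handle the near-diagonal contribution -- is the technical crux. Once this is in place, the rest of the argument is a formal assembly of the square function local $Tb$ theorem, Littlewood-Paley theory, and multilinear duality.
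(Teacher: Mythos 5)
Your overall strategy (feed the testing condition \eqref{Tcancel} into the square function local $Tb$ theorem of \cite{GHO}, then convert square function bounds into bounds for $T$) is the right skeleton, and your identification of the kernel verification for $Q_tT^{*j}(P_t\cdot,\ldots,P_t\cdot)$ as a technical point is fair. But the middle step is a genuine gap: you claim that the square function bound
$\bigl\|\bigl(\int_0^\infty|Q_tT^{*j}(P_tg_1,\ldots,P_tg_m)|^2\tfrac{dt}{t}\bigr)^{1/2}\bigr\|_{L^r}\lesssim\prod_i\|g_i\|_{L^{r_i}}$
is ``equivalent to the $L^r$ boundedness of $T^{*j}$ by the standard Littlewood--Paley characterization.'' It is not. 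The Littlewood--Paley characterization lower-bounds $\|F\|_{L^r}$ by the $g$-function of a \emph{fixed} distribution $F$; here the object inside the square function is $Q_t F_t$ with $F_t=T^{*j}(P_tg_1,\ldots,P_tg_m)$ depending on $t$, so no reverse square function inequality applies. The correct mechanism is the Calder\'on reproducing formula
$\<T(f_1,\ldots,f_m),f_0\>=\int_0^\infty t\frac{d}{dt}\<T(P_t^2f_1,\ldots,P_t^2f_m),P_t^2f_0\>\frac{dt}{t}$,
whose product-rule expansion produces $m+1$ \emph{new} operators $T_0,\ldots,T_m$ (one for each slot the derivative lands on). Each single term $\<T_i(\ldots),f_i\>$ is then controlled by Cauchy--Schwarz in $(x,t)$ against the square function $S_i$ of the $i$-th adjoint family and the $g$-function of the remaining test function. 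Note that this uses the testing conditions for \emph{all} the adjoints simultaneously (including $T^{*0}=T$), not one adjoint at a time as your duality step suggests; a single adjoint's square function bound controls only one of the $m+1$ pieces of $T$ (or of $T^{*j}$), never the whole operator.

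Two further points. First, the square function theorem of \cite{GHO} gives \eqref{Lpbound} only for target exponents $2\le p<\infty$, not the full range $1<r_j<\infty$ you assert; one must first land at a single tuple such as $L^{2m}\times\cdots\times L^{2m}\to L^2$. Second, after the Cauchy--Schwarz step each piece $T_i$ is bounded only at that one tuple of exponents, and $T_i$ is not $T$ or $T^{*i}$; to recombine the pieces one must check that each $T_i$ is itself a multilinear Calder\'on--Zygmund operator with standard kernel $K_i(x,\vec y)=\int_0^\infty\<T^{*i}(\varphi_t^{y_1},\ldots,\varphi_t^{y_m}),\psi_t^x\>\frac{dt}{t}$, invoke the Grafakos--Torres theory to move each $T_i$ to a common tuple (e.g.\ $L^{m+1}\times\cdots\times L^{m+1}\to L^{(m+1)/m}$), sum, and then apply that theory once more to $T$. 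These interpolation/extrapolation steps are absent from your outline, and without them the pieces cannot be reassembled.
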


The precise meaning of \eqref{Tcancel} is the following:  For any $\varphi,\psi\in C_0^\infty$ such that $\widehat\varphi(0)=1$ and $\widehat\psi(0)=0$, \eqref{Tcancel} holds for $P_tf=\varphi_t*f$, $Q_tf=\psi_t*f$, $\varphi_t(x)=\frac{1}{t^n}\varphi(\frac{x}{t})$, and $\psi_t(x)=\frac{1}{t^n}\psi(\frac{x}{t})$, where the constant is independent of the dyadic cube $Q$, but may depend on $\varphi$ and $\psi$.

We prove this theorem by applying the square function estimates that were also proved in \cite{GHO}, but with a few minor modifications to allow for the extension to Theorem \ref{t(1,1)}.  This note is intended to be an addendum to the article \cite{GHO}.  So the reader should refer to that article for definitions, discussion, and history related to these results.

\section{A Few Definitions and Results from \cite{GHO}}

Define the family of multilinear of operators $\{\Theta_{t}\}_{t>0}$ 
\begin{align}
    \Theta_t(f_1,...,f_m)(x)=\int_{\R^{mn}}\theta_t(x,y_1,...,y_m)\prod_{i=1}^mf_i(y_i)dy_i\label{theta}
\end{align}
where $\theta_t:\R^{(m+1)n}\rightarrow\C$ and the square function
\begin{align}
S(f_1,...,f_m)(x)=\(\int_0^\infty|\Theta_t(f_1,...,f_m)(x)|^2\frac{dt}{t}\)^\frac{1}{2}\label{sqfunction}
\end{align}
associated to $\{\Theta_t\}_{t>0}$, where   $f_i$ for $i=1,...,m$ are initially functions in $ C_{0}^{\infty}(\R^{n})$.  Also assume that $\theta_t$ satisfies for all $x,y_1,...,y_m,x', y_1',...,y_m'\in\R^n$
\begin{align}
&|\theta_t(x,y_1,...,y_m)|\less\frac{t^{-mn}}{\prod_{i=1}^m(1+t^{-1}|x-y_i|)^{N+\gamma}}\label{size}\\
&|\theta_t(x,y_1,...,y_m)-\theta_t(x,y_1,...,y_i',...,y_m)|\less t^{-mn}(t^{-1}|y_i-y_i'|)^\gamma\label{regy}\\
&|\theta_t(x,y_1,...,y_m)-\theta_t(x',y_1,...,y_m)|\less t^{-mn}(t^{-1}|x-x'|)^\gamma\label{regx}
\end{align}
for some $N>n$ and $0<\gamma\leq1$.  The following results was proved in \cite{GHO}.
\begin{oldtheorem}
Let $\Theta_t$ and $S$ be defined as in \eqref{theta} and \eqref{sqfunction} where $\theta_t$ satisfies \eqref{size}-\eqref{regx}.  Suppose there exist $q_i,q>1$ for $i=1,...,m$ with $\frac{1}{q}=\sum_{i=1}^m\frac{1}{q_i}$ and functions $b_Q^i$ indexed by dyadic cubes $Q\subset\R^n$ for $i=1,...,m$ such that for every dyadic cube $Q$
\begin{align}
&\int_{\R^n}|b_Q^i|^{q_i}\leq B_1|Q|\label{bsize}\\
&\frac{1}{B_2}\leq\left|\frac{1}{|Q|}\int_Q\prod_{i=1}^mb_Q^i(x)dx\right|\label{baccretive}\\
&\left|\frac{1}{|R|}\int_R\prod_{i=1}^mb_Q^i(x)dx\right|\leq B_3\prod_{i=1}^m\left|\frac{1}{|R|}\int_Rb_Q^i(x)dx\right|\label{bcompatible}\\
&\hspace{4cm}\text{ for all dyadic subcubes }R\subset Q\notag\\
&\int_Q\(\int_0^{\ell(Q)}|\Theta_t(b_Q^1,...,b_Q^m)(x)|^2\frac{dt}{t}\)^\frac{q}{2}dx\leq B_4|Q|.\label{thetacancel}
\end{align}
Then $S$ satisfies 
\begin{align}
||S(f_1,...,f_m)||_{L^p}\less\prod_{i=1}^m||f||_{L^{p_i}}\label{Lpbound}
\end{align}
for all $1<p_i<\infty$ and $2\leq p<\infty$ satisfying $\frac{1}{p}=\frac{1}{p_1}+\cdots+\frac{1}{p_m}$.
\end{oldtheorem}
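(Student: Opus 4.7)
The plan is to apply the square function local $Tb$ theorem stated above to appropriately chosen $m$-linear families built from $T$ and its adjoints $T^{*j}$, and then to pass from the resulting square function $L^p$ bounds to operator bounds on $T^{*j}$ (and hence on $T$) via a Calder\'on reproducing formula. For each $j=1,\ldots,m$, I would consider the family of $m$-linear operators
$$\Theta_t^{(j)}(f_1,\ldots,f_m)(x):=Q_tT^{*j}(P_tf_1,\ldots,P_tf_m)(x).$$
The first task is to verify that the kernel $\theta_t^{(j)}$ of $\Theta_t^{(j)}$ satisfies the size and regularity hypotheses \eqref{size}--\eqref{regx} for some $N>n$ and $0<\gamma\le 1$: $\theta_t^{(j)}$ arises from the standard Calder\'on-Zygmund kernel of $T^{*j}$ by integrating against $\psi_t$ in the output variable and against $\varphi_t$ in each of the $m$ input variables, and the required decay and H\"older estimates follow from the kernel bounds on $K$, the smoothness and compact support of $\varphi,\psi$, and the WBP hypothesis to handle near-diagonal contributions. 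With $b_Q^i:=b_Q^{i,j}$, the conditions \eqref{bsize}--\eqref{bcompatible} of the quoted theorem are precisely the $m$-compatibility of $\{b_Q^{i,j}\}$, and its testing condition \eqref{thetacancel} reduces to the hypothesis \eqref{Tcancel}. The quoted theorem therefore yields
$$\|S^{(j)}(f_1,\ldots,f_m)\|_{L^p}\less\prod_{i=1}^m\|f_i\|_{L^{p_i}}$$
for the square function $S^{(j)}$ associated to $\Theta_t^{(j)}$, for each $j=1,\ldots,m$ and all admissible exponents with $2\le p<\infty$ and $1/p=\sum 1/p_i$.

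The second task is to deduce $L^p$ boundedness of $T^{*j}$ itself from the $L^p$ boundedness of $S^{(j)}$. For this I would choose $\varphi,\psi$ so that $I=\int_0^\infty Q_t\widetilde Q_t\,\tfrac{dt}{t}$ holds as a Calder\'on reproducing formula, pair $T^{*j}(f_1,\ldots,f_m)$ against a smooth test function $g$, and expand
$$\langle T^{*j}(f_1,\ldots,f_m),g\rangle=\int_0^\infty\langle Q_tT^{*j}(f_1,\ldots,f_m),\widetilde Q_tg\rangle\,\tfrac{dt}{t}.$$
Writing each $f_i=P_tf_i+(I-P_t)f_i$ splits the right-hand side into a main term of the form $\int_0^\infty\langle\Theta_t^{(j)}(f_1,\ldots,f_m),\widetilde Q_tg\rangle\,\tfrac{dt}{t}$, which is controlled by Cauchy--Schwarz in $(x,t)$ together with the $S^{(j)}$ bound and the classical Littlewood--Paley $g$-function estimate applied to $g$, plus $2^m-1$ error terms each containing at least one factor $(I-P_t)f_i$. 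These error terms are of paraproduct type, and the extra decay in $t$ generated by the vanishing moment of $\psi$ combined with the smoothness of $\varphi$ allows them to be absorbed by standard multilinear paraproduct and square function estimates.

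Finally, once $T^{*j}:L^{p_1}\times\cdots\times L^{p_m}\to L^p$ is established for every $j=1,\ldots,m$ in the range $2\le p<\infty$, the standard Calder\'on-Zygmund kernel bounds on $K$ together with a multilinear Calder\'on-Zygmund decomposition extend the bound to the full range $1<p_i<\infty$, and duality in each slot recovers the corresponding bound for $T$ itself. The main obstacle is the second task above, namely the Calder\'on reproducing/paraproduct reduction that converts an $S^{(j)}$-bound into a $T^{*j}$-bound; in particular, the careful treatment of the $2^m-1$ off-diagonal paraproduct error terms produced when $(I-P_t)f_i$ appears in one or more slots is the technical heart of the argument, and mirrors the standard reductions used in classical proofs of the linear and bilinear $Tb$ theorems.
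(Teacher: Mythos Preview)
Your proposal does not address the stated theorem at all. The displayed statement is the square-function local $Tb$ theorem quoted from \cite{GHO}; the present paper does not prove it but only cites it, so there is no ``paper's own proof'' to compare against. Your write-up, in turn, \emph{assumes} this theorem and uses it as a black box to argue toward Theorem~\ref{t(1,1)}. As a proof of the quoted square-function theorem it is therefore vacuous: nowhere do you establish the Carleson-measure or stopping-time machinery that actually underlies \eqref{Lpbound}.

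If one reads your proposal instead as an attempt at Theorem~\ref{t(1,1)}, it is close in spirit to the paper's proof but diverges at the key reduction step, and that divergence contains a real gap. The paper writes $\langle T(f_1,\ldots,f_m),f_0\rangle$ using $t\frac{d}{dt}P_t^2$ simultaneously in all $m+1$ slots, producing exactly $m+1$ terms $T_0,\ldots,T_m$, each already of the clean form $\int_0^\infty Q_tT^{*i}(P_t^2\cdot,\ldots,P_t^2\cdot)\frac{dt}{t}$ with no remainder; each pairs directly against $Q_t^{(2)*}f_i$ and is bounded by the square-function estimate plus a standard $g$-function bound, after which Grafakos--Torres gives the full range. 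Your route instead fixes $j$, reproduces only in the output slot, and then expands each input as $f_i=P_tf_i+(I-P_t)f_i$, creating $2^m-1$ cross terms with at least one factor $(I-P_t)f_i$. You claim these are ``paraproduct type'' and are absorbed by ``extra decay in $t$ from the vanishing moment of $\psi$'', but this is not available: the single $Q_t$ sits outside the rough operator $T^{*j}$, and $(I-P_t)f_i$ carries no smallness or mean-zero structure at scale $t$, so there is no almost-orthogonality to exploit without an additional cancellation hypothesis on $T^{*j}$ (of the type $T^{*j}(1,\ldots,1)\in\mathrm{BMO}$) that is not assumed here. The paper's all-slots-at-once decomposition is precisely what circumvents these uncontrolled error terms.
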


If $\{b_Q\}$ satisfies \eqref{bsize} and \eqref{baccretive}, we say that $\{b_Q\}$ is a pseudo-accretive system.  We say that $\{b_Q^i\}$ for $i=1,...,m$ is an $m$-compatible, or just compatible, collection of pseudo-accretive systems if they satisfy \eqref{bsize}-\eqref{bcompatible}.  


\section{Proof of Theorem \ref{t(1,1)}}


\begin{proof}[Proof of Theorem \ref{t(1,1)}]
Let $P_t$ be a smooth approximation to identity operators with smooth compactly supported kernels that $P_t^2f\rightarrow f$ as $t\rightarrow0^+$ and $P_t^2f\rightarrow0$ as $t\rightarrow\infty$ in $\S$ for $f\in \S_0$.  Here $\S_0$ is the subspace of Schwartz functions satisfying $|\widehat f(\xi)|\leq C_M|\xi|^M$ for all $M\in\N$.  There exist Littlewood-Paley-Stein projection operators $Q_t^{(i)}$ for $i=1,2$ with smooth compactly supported kernels such that $t\frac{d}{dt}P_t^2=Q_t^{(2)}Q_t^{(1)}=Q_t$.  Using these operators, we decompose $T$ for $f_i\in\S_0$, $i=0,...,m$
\begin{align}
\<T(f_1,...,f_m),f_0\>&=\int_0^\infty t\frac{d}{dt}\<T(P_t^2f_1,...,P_t^2f_m),P_t^2f_0\>\frac{dt}{t}\notag\\
&=\sum_{i=0}^m\<\int_0^\infty Q_tT(P_t^2f_1,...,P_tf_{i-1},P_tf_0,P_tf_{i+1}...,P_t^2f_m)\frac{dt}{t},f_i\>\notag\\
&=\sum_{i=0}^m\<T_i(f_1,...,f_{i-1},f_0,f_{i+1},...,f_m),f_i\>,\label{Tdecomp}
\end{align}
where we take the last line in \eqref{Tdecomp} as the definition of $T_i$ for $i=0,1,...,m$.  It follows that $T_i$ is a multilinear singular integral operator with standard kernel
\begin{align*}
K_i(x,y_1,...,y_m)=\int_0^\infty\<T^{*i}(\varphi_t^{y_1},...,\varphi_t^{y_m}),\psi_t^x\>\frac{dt}{t},
\end{align*}
Also let $\Theta_t^i$ be the multilinear operator associated to 
\begin{align*}
\theta_t^i(x,y_1,...,y_m)=Q_t^{(1)}T^{*i}(\varphi_t^{y_1},...,\varphi_t^{y_m})(x),
\end{align*}
and let $S_i$ be the square function associated to $\Theta_t^i$.  Note that $\theta_t^i(x,y_1,...,y_m)\neq \<T^{*i}(\varphi_t^{y_1},...,\varphi_t^{y_m}),\psi_t^x\>$, and that $T_i$ is not actually the integral of $\Theta_t^i$ (one has $Q_t$ and the other has $Q_t^{(1)}$).  Furthermore, by the hypotheses on $T^{*i}$ and by the local $Tb$ theorem for square functions from \cite{GHO}, it follows that $S_i$ is bounded from $L^{2m}\times\cdots\times  L^{2m}$ into $L^2$.  Therefore we have
\begin{align*}
\<T_i(f_1,...,f_m),f_0\>&=\int_0^\infty \int_{\R^n}Q_tT(P_t^2f_1,...,P_t^2f_m)(x)f_0(x)dx\frac{dt}{t}\\
&=\int_0^\infty \int_{\R^n}Q_t^{(1)}T(P_t^2f_1,...,P_t^2f_m)(x)Q_t^{(2)\,*}f_0(x)dx\frac{dt}{t}\\
&\leq||S_i(f_1,...,f_m)||_{L^2}\left|\left|\(\int_0^\infty|Q_t^{(2)\,*}f_0|^2\frac{dt}{t}\)^\frac{1}{2}\right|\right|_{L^2}\less||f_1||_{L^{2m}}\cdots||f_m||_{L^{2m}}||f_0||_{L^2}.
\end{align*}
Hence $T_i$ is bounded from $L^{2m}\times\cdots\times L^{2m}$ into $L^2$, and by the multilinear Calder\'on-Zygmund theory developed by Grafakos and Torres in \cite{GT1,GT2}, it follows that $T_i$ also bounded from $L^{p_1}\times\cdots\times L^{p_m}$ into $L^p$ for all $1<p_1,...,p_m<\infty$ with $\frac{1}{p}=\frac{1}{p_1}+\cdots+\frac{1}{p_m}$.  In particular, it follows that $T_i$ is bounded from $L^{m+1}\times\cdots\times L^{m+1}$ into $L^\frac{m+1}{m}$ for each $i=0,1,...,m$.  Then continuing from \eqref{Tdecomp}, we have
\begin{align*}
|\<T(f_1,...,f_m),f_0\>|&\leq\sum_{i=0}^m|\<T_i(f_1,...,f_{i-1},f_0,f_{i+1},...,f_m),f_i\>|\\
&\leq\sum_{i=0}^m||T_i(f_1,...,f_{i-1},f_0,f_{i+1},...,f_m)||_{L^\frac{m+1}{m}}||f_i||_{L^{m+1}}\less\prod_{j=0}^m||f_j||_{L^{m+1}}.
\end{align*}
Therefore $T$ is bounded from $L^{m+1}\times\cdots L^{m+1}$ into $L^\frac{m+1}{m}$.  Then it again follows from the multilinear Calder\'on-Zygmund theory in \cite{GT1,GT2} that $T$ is bounded from $L^{p_1}\times\cdots\times L^{p_m}$ into $L^p$ for all $1<p_1,...,p_m<\infty$ such that $\frac{1}{p}=\frac{1}{p_1}+\cdots+\frac{1}{p_m}$.
\end{proof}


\bibliographystyle{amsplain}

\end{document}